\theoremstyle{plain}
\newtheorem{theorem}{Theorem}[section]
\newtheorem{lem}{Lemma}[section]
\theoremstyle{remark}
\newtheorem{rem}{Remark}[section]
\DeclareMathOperator{\arcsinh}{arcsinh}
\numberwithin{equation}{section}
\begin{document}

\title[Sharp bounds for Neuman-S\'andor mean]
{Sharp bounds in terms of the power of the contra-harmonic mean for Neuman-S\'andor mean}

\author[W.-D. Jiang]{Wei-Dong Jiang}
\address[Jiang]{Department of Information Engineering, Weihai Vocational College, Weihai City, Shandong Province, 264210, China}
\email{\href{mailto: W.-D. Jiang <jackjwd@163.com>}{jackjwd@163.com}, \href{mailto: W.-D. Jiang <jackjwd@hotmail.com>}{jackjwd@hotmail.com}}

\author[F. Qi]{Feng Qi}
\address[Qi]{School of Mathematics and Informatics, Henan Polytechnic University, Jiaozuo City, Henan Province, 454010, China}
\email{\href{mailto: F. Qi <qifeng618@gmail.com>}{qifeng618@gmail.com}, \href{mailto: F. Qi <qifeng618@hotmail.com>}{qifeng618@hotmail.com}, \href{mailto: F. Qi <qifeng618@qq.com>}{qifeng618@qq.com}}
\urladdr{\url{http://qifeng618.wordpress.com}}

\subjclass[2010]{Primary 26E60; Secondary 26D05, 33B10}

\keywords{Sharp bound; Neuman-S\'andor mean; power; contra-harmonic mean}

\begin{abstract}
In the paper, the authors obtain sharp bounds in terms of the power of the contra-harmonic mean for Neuman-S\'andor mean.
\end{abstract}

\thanks{This work was supported in part by the Project of Shandong Province Higher Educational Science and Technology Program under grant No. J11LA57.}

\thanks{This paper was typeset using \AmS-\LaTeX}

\maketitle

\section{Introduction}

For positive numbers $a,b>0$ with $a\neq b$, the second Seiffert mean $T(a,b)$, the root-mean-square $S(a,b)$, Neuman-S\'andor mean $M(a,b)$, and the contra-harmonic mean $C(a,b)$ are respectively defined in~\cite{Neuman-Sandor-Pannon, back10} by
\begin{align}\label{eq1.2}
T(a,b)&=\frac{a-b}{2\arctan[(a-b)/(a+b)]}, & S(a,b)&=\sqrt{\frac{a^2+b^2}{2}}\,,\\
\label{eq1.3}
M(a,b)&=\frac{a-b}{2\arcsinh[(a-b)/(a+b)]}, & C(a,b)&=\frac{a^2+b^2}{a+b}.
\end{align}
It is well known~\cite{Neuman-jmi-06-62, Neuman-jmi-601-609, Neuman-Sandor-Pannon2} that the inequalities
\begin{equation*}
M(a,b)<T(a,b)<S(a,b)<C(a,b)
\end{equation*}
hold for all $a,b>0$ with $a\neq b$ .
\par
In~\cite{seiffert15, seiffert10}, the inequalities
\begin{equation}\label{eq1.5}
S(\alpha a+(1-\alpha)b,\alpha b+(1-\alpha)a)<T(a,b)<S(\beta a+(1-\beta)b,\beta b+(1-\beta)a)
\end{equation}
and
\begin{equation}\label{eq1.6}
C(\lambda a+(1-\lambda)b,\lambda b+(1-\lambda)a)<T(a,b)<C(\mu a+(1-\mu)b,\mu b+(1-\mu)a)
\end{equation}
were proved to be valid for $\frac12<\alpha, \beta, \lambda, \mu<1$ and for all $a,b>0$ with $a\neq b$ if and only if
\begin{equation}
\begin{aligned}
\alpha & \le \frac12\biggl(1+\sqrt{\frac{16}{\pi^2}-1}\,\biggr), & \beta & \ge\frac{3+\sqrt{6}\,}6, \\
\lambda & \le \frac12\biggl(1+\sqrt{\frac4\pi-1}\,\biggr), & \mu &\ge\frac{3+\sqrt{3}\,}6
\end{aligned}
\end{equation}
respectively. In~\cite{Jiang-Neuman.tex}, the double inequality
\begin{equation}\label{eq1.7}
S(\alpha a+(1-\alpha)b,\alpha b+(1-\alpha)a)<M(a,b)<S(\beta
a+(1-\beta)b,\beta b+(1-\beta)a)
\end{equation}
was proved to be valid for $\frac12<\alpha,\beta<1$ and for all $a,b>0$ with $a\neq b$ if and only if
\begin{equation}
\alpha\le \frac{1}{2}\Biggl\{1+\sqrt{\frac{1}{\bigl[\ln \bigl(1+\sqrt{2}\,\bigr)\bigr]^2}-1}\,\Biggr\}\quad \text{and}\quad \beta\ge \frac{3+\sqrt{3}\,}{6}.
\end{equation}
For more information on this topic, please refer to recently published papers~\cite{back3, jiang-aadm4, Li-Long-Chu-JIA-12, background-Jiang-Qi-revised.tex} and references cited therein.
\par
For $t\in\bigl(\frac12,1\bigr)$ and $p\ge\frac12$, let
\begin{equation}\label{eq1.8}
Q_{t,p}(a,b)=C^{p}(ta+(1-t)b, tb+(1-t)a)A^{1-p}(a,b),
\end{equation}
where $A(a,b)=\frac{a+b}2$ is the classical arithmetic mean of $a$ and $b$. Then, by definitions in~\eqref{eq1.2} and~\eqref{eq1.3}, it is easy to see that
\begin{align*}
Q_{t,1/2}(a,b)&=S(ta+(1-t)b, tb+(1-t)a),\\
Q_{t,1}(a,b)&=C(ta+(1-t)b, tb+(1-t)a),
\end{align*}
and $Q_{t,p}(a,b)$ is strictly increasing with respect to $t\in\bigl(\frac12,1\bigr)$.
\par
Motivating by results mentioned above, we naturally ask a question: What are the greatest value $t_1=t_1(p)$ and the least value $t_{2}=t_{2}(p)$ in $\bigl(\frac12,1\bigr)$ such that the double inequality
\begin{equation}\label{eq1.9}
Q_{t_1,p}(a,b)<M(a,b)<Q_{t_{2},p}(a,b)
\end{equation}
holds for all $a,b>0$ with $a\neq b$ and for all $p\ge\frac12$?
\par
The aim of this paper is to answer this question. The solution to this question may be stated as the following Theorem~\ref{th1.1}.

\begin{theorem}\label{th1.1}
Let $t_1,t_{2}\in\bigl(\frac12,1\bigr)$ and $p\in\bigl[\frac12,\infty\bigr)$. Then the double inequality~\eqref{eq1.9} holds for all $a,b>0$ with $a\neq b$ if and only if
\begin{equation}
t_1\le\frac12\Biggl[1+\sqrt{\biggl(\frac1{t^*}\biggr)^{1/p}-1}\,\Biggr]\quad \text{and}\quad t_{2}\ge \frac12\biggl(1+\frac1{\sqrt{6p}\,}\biggr),
\end{equation}
where
\begin{equation}\label{t-star-dfn}
t^*=\ln (1+\sqrt{2}\,)=0.88\dotsc.
\end{equation}
\end{theorem}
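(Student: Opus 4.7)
The plan is to exploit the symmetry and homogeneity (of degree one) of $M$ and $Q_{t,p}$ in order to reduce the two-variable inequality \eqref{eq1.9} to a one-parameter family of one-variable inequalities. Setting $a=1+x$, $b=1-x$ with $x\in(0,1)$ and writing $s_i:=2t_i-1\in(0,1)$, direct computation gives
\[
M(a,b)=\frac{x}{\arcsinh x},\qquad Q_{t_i,p}(a,b)=\bigl(1+s_i^{2}x^{2}\bigr)^{p},
\]
so \eqref{eq1.9} is equivalent to
\[
s_1^{2}<F(x)<s_2^{2}\quad\text{for all }x\in(0,1),\qquad F(x):=\frac{1}{x^{2}}\Biggl[\biggl(\frac{x}{\arcsinh x}\biggr)^{1/p}-1\Biggr].
\]

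Next I would investigate the boundary behaviour of $F$. Using $\arcsinh x=x-x^{3}/6+O(x^{5})$ one finds
\[
\lim_{x\to 0^{+}}F(x)=\frac{1}{6p},
\]
while the identity $\arcsinh 1=\ln(1+\sqrt 2)=t^{*}$ gives $F(1)=(1/t^{*})^{1/p}-1$. Since these are exactly the squares of the sharp values $2t_{2}-1$ and $2t_{1}-1$ asserted in the theorem, necessity is immediate: if $s_{2}^{2}<1/(6p)$ then the right inequality fails for $x$ near $0$, while if $s_{1}^{2}>(1/t^{*})^{1/p}-1$ then the left inequality fails for $x$ close to~$1$.

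For sufficiency it suffices to prove that $F$ is strictly decreasing on $(0,1)$: once this is known, $\sup F=1/(6p)$ and $\inf F=(1/t^{*})^{1/p}-1$, and inverting $s_{i}=2t_{i}-1$ produces the claimed sharp bounds on $t_{1}$ and $t_{2}$. To establish monotonicity I would differentiate and then substitute $u=\arcsinh x$, so that $x=\sinh u$ and $u\in(0,t^{*})$; the inequality $F'(x)<0$ then reduces to a single transcendental inequality in $u$ involving only $\sinh u$, $\cosh u$, and the exponent $1/p$. The natural plan is to rewrite it as the positivity of one auxiliary function of $u$, expand in Maclaurin series, and verify that the non-vanishing coefficients have constant sign when $p\ge 1/2$, possibly in combination with the classical estimates $\tanh u<u<\sinh u$ and a Bernoulli-type bound on $(u/\sinh u)^{1/p}$.

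The principal obstacle is precisely this monotonicity, which has to hold uniformly in $p\in[1/2,\infty)$. The exponent $1/p$ prevents a purely algebraic simplification, and the threshold $p=1/2$ must enter somewhere, since for smaller $p$ the monotonicity presumably fails; the sharpest part of the proof is therefore likely a delicate coefficient check in the $u$-expansion that just barely succeeds at $p=1/2$. Once $F'<0$ is established, the remaining steps---taking the endpoint limits and inverting $t_{i}=(1+s_{i})/2$---are entirely routine.
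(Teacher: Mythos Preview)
Your reduction coincides with the paper's: both normalize via $x=(a-b)/(a+b)\in(0,1)$ and $s=2t-1$, and your endpoint limits $F(0^{+})=1/(6p)$, $F(1^{-})=(1/t^{*})^{1/p}-1$ together with the necessity argument are correct. Your claim that $F$ is strictly decreasing on $(0,1)$ is also true, and once it is established the remainder is indeed routine.

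Where you diverge from the paper is precisely at this monotonicity step, which you leave as a sketch. The substitution $u=\arcsinh x$ does not remove the exponent $1/p$, so a Maclaurin argument must track coefficients that are polynomials in $1/p$ and check a sign uniformly over $p\ge 1/2$; this is plausible but laborious, and you have not carried it out. The paper sidesteps the fractional power entirely by working with the logarithm
\[
f_{u,p}(x)=\ln\frac{Q_{t,p}}{M}=p\ln(1+ux^{2})-\ln x+\ln\arcsinh x,\qquad u=s^{2},
\]
whose derivative is free of non-integer powers and factors as a positive quantity times $u-g_{1}(x)/g_{2}(x)$ for explicit $g_{1},g_{2}$ with $g_{1}(0)=g_{2}(0)=0$. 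The ratio $g_{1}/g_{2}$ is shown to be strictly decreasing by the monotone L'H\^opital rule, which reduces further to the elementary fact that $(1+x^{2})(\arcsinh x)/x$ is increasing on $(0,1)$. From this one reads off that $f_{u,p}$ is monotone or unimodal (down--then--up) according to where $u$ sits relative to $1/(6p)$ and $g_{1}(1)/g_{2}(1)$, and comparing $f_{u,p}(1^{-})=p\ln(1+u)+\ln t^{*}$ with $0$ produces the threshold $u_{0}=(1/t^{*})^{1/p}-1$. This argument actually implies your $F'<0$ (each intermediate level $u$ is attained exactly once), but it reaches the conclusion without ever differentiating a $1/p$-th power. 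In short: taking logarithms first buys a clean monotonicity problem whose hardest part is independent of $p$; your formulation buys a shorter endgame once $F'<0$ is known, at the price of an analytic step you have not yet completed.
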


\begin{rem}
When $p=\frac12$ in Theorem~\ref{th1.1}, the double inequality~\eqref{eq1.9} becomes~\eqref{eq1.7}.
\end{rem}

\begin{rem} If taking $p=1$ in Theorem~\ref{th1.1}, we can conclude that the double inequality
\begin{equation}\label{eq1.10}
C(\lambda a+(1-\lambda)b,\lambda b+(1-\lambda)a)<M(a,b)<C(\mu a+(1-\mu)b,\mu b+(1-\mu)a)
\end{equation}
holds for all $a,b>0$ with $a\neq b$ if and only if
\begin{equation}
\frac12<\lambda\le\frac12\Biggl[1+\sqrt{\frac{1}{\ln\bigl(1+\sqrt{2}\,\bigr)}-1}\,\Biggr]\quad \text{and}\quad 1>\mu\ge\frac12\biggl(1+\frac{\sqrt{6}\,}6\biggr).
\end{equation}
\end{rem}

\section{Lemmas}

In order to prove Theorem~\ref{th1.1}, we need the following lemmas.

\begin{lem}[{\cite[Theorem~1.25]{anderson1}}]\label{mon-quotient-lem}
For $-\infty<a<b<\infty$, let $f,g:[a,b]\to{\mathbb{R}}$ be continuous on $[a,b]$ and differentiable on $(a,b)$. If $g'(x)\neq 0$ and $\frac{f'(x)}{g'(x)}$ is strictly increasing \textup{(}or strictly decreasing, respectively\textup{)} on $(a,b)$, so are the functions
\begin{equation}
\frac{f(x)-f(a)}{g(x)-g(a)}\quad \text{and}\quad \frac{f(x)-f(b)}{g(x)-g(b)}.
\end{equation}
\end{lem}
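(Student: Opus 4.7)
The plan is to prove the lemma by a direct computation of the derivative of each quotient, combined with Cauchy's Mean Value Theorem to compare the quotient to the value of $f'/g'$ at a nearby point. The monotonicity hypothesis on $f'/g'$ then forces a definite sign on the derivative of the quotient.

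First I would establish that $g$ is strictly monotonic on $[a,b]$. Since $g'(x)\neq 0$ on $(a,b)$ and derivatives enjoy the intermediate value property (Darboux's theorem), $g'$ has constant sign on $(a,b)$. Consequently $g(x)-g(a)\neq 0$ for $x\in(a,b]$ and $g(x)-g(b)\neq 0$ for $x\in[a,b)$, and moreover $g'(x)$ and $g(x)-g(a)$ have the same sign (both positive if $g'>0$, both negative if $g'<0$); similarly $g'(x)$ and $g(x)-g(b)$ have opposite signs, so $g'(x)/[g(x)-g(b)]$ is negative.

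Next, define $F(x)=\dfrac{f(x)-f(a)}{g(x)-g(a)}$ on $(a,b]$. A routine quotient-rule computation followed by factoring gives
\begin{equation*}
F'(x)=\frac{g'(x)}{g(x)-g(a)}\left[\frac{f'(x)}{g'(x)}-\frac{f(x)-f(a)}{g(x)-g(a)}\right].
\end{equation*}
By Cauchy's Mean Value Theorem, there exists $\xi\in(a,x)$ with $\dfrac{f(x)-f(a)}{g(x)-g(a)}=\dfrac{f'(\xi)}{g'(\xi)}$, so the bracket reduces to $\dfrac{f'(x)}{g'(x)}-\dfrac{f'(\xi)}{g'(\xi)}$. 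If $f'/g'$ is strictly increasing then this bracket is positive (since $\xi<x$), and combined with the positivity of $g'(x)/[g(x)-g(a)]$ established above, we conclude $F'(x)>0$, so $F$ is strictly increasing on $(a,b]$. A symmetric argument using $\xi\in(x,b)$ handles $G(x)=\dfrac{f(x)-f(b)}{g(x)-g(b)}$: there the bracket $\dfrac{f'(x)}{g'(x)}-\dfrac{f'(\xi)}{g'(\xi)}$ is negative (since $x<\xi$) but the prefactor $g'(x)/[g(x)-g(b)]$ is also negative, so $G'(x)>0$. The strictly decreasing case is obtained by applying the already-proved strictly increasing case to $-f$ and $g$.

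The only real obstacle is justifying the sign of the prefactor $g'(x)/[g(x)-g(a)]$ and its analog; this is where Darboux's theorem is essential, since without constancy of sign of $g'$ one cannot rule out the quotients changing sign. Continuity of $F$ at the endpoint $a$ (needed to upgrade strict monotonicity on $(a,b]$ to the stated conclusion) is automatic on $(a,b]$ and vacuous at $a$ since the endpoint value $F(a)$ is not part of the assertion; analogously for $G$ at $b$. Everything else is bookkeeping.
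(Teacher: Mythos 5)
Your proof is correct. The paper itself offers no proof of this lemma --- it is quoted verbatim from Anderson--Vamanamurthy--Vuorinen \cite[Theorem~1.25]{anderson1} --- but your argument is essentially the standard proof of that monotone l'H\^opital rule: the quotient-rule factorization of $F'$, Cauchy's Mean Value Theorem to replace the difference quotient by $f'(\xi)/g'(\xi)$, and Darboux's theorem to fix the sign of $g'$ (hence of the prefactors $g'(x)/[g(x)-g(a)]$ and $g'(x)/[g(x)-g(b)]$) are exactly the right ingredients, and the reduction of the decreasing case to the increasing one via $-f$ is fine.
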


\begin{lem}\label{g-3-lem}
The function
\begin{equation}\label{g-3-dfn}
h(x)=\frac{\bigl(1+x^2\bigr)\arcsinh {x}}x
\end{equation}
is strictly increasing and convex on $(0,\infty)$.
\end{lem}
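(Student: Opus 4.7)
The plan is to reduce the two claims to a single application of the monotone L'Hospital rule (Lemma~\ref{mon-quotient-lem}) applied to a suitable quotient whose numerator and denominator both vanish at $x=0$.

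First I would differentiate $h$ directly. Writing $h(x)=\bigl(\tfrac1x+x\bigr)\arcsinh x$ and simplifying gives
\begin{equation*}
h'(x)=\frac{f(x)}{g(x)},\qquad f(x)=(x^2-1)\arcsinh x+x\sqrt{1+x^2},\qquad g(x)=x^2.
\end{equation*}
Both $f$ and $g$ extend continuously to $0$ with $f(0)=g(0)=0$, so Lemma~\ref{mon-quotient-lem} is applicable on any interval $[0,b]$. Next I would compute the derivative ratio: after cancellation one finds
\begin{equation*}
f'(x)=2x\arcsinh x+\frac{3x^2}{\sqrt{1+x^2}},\qquad g'(x)=2x,
\end{equation*}
so that
\begin{equation*}
\frac{f'(x)}{g'(x)}=\arcsinh x+\frac{3x}{2\sqrt{1+x^2}}.
\end{equation*}

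The key step is to verify that this quotient is strictly increasing on $(0,\infty)$, which is a clean calculation: differentiating once more collapses to
\begin{equation*}
\frac{d}{dx}\biggl[\arcsinh x+\frac{3x}{2\sqrt{1+x^2}}\biggr]=\frac{1}{\sqrt{1+x^2}}+\frac{3}{2(1+x^2)^{3/2}}=\frac{2x^2+5}{2(1+x^2)^{3/2}},
\end{equation*}
which is manifestly positive. Applying Lemma~\ref{mon-quotient-lem} to $f,g$ on $[0,b]$ for every $b>0$ then yields that $h'(x)=f(x)/g(x)$ is strictly increasing on $(0,\infty)$, which is exactly the strict convexity of $h$.

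To conclude that $h$ is also strictly increasing, I would check the boundary value $\lim_{x\to0^+}h'(x)=0$ (either via Taylor expansion of $\arcsinh x$ in $f$, or by observing that $f'(0)/g'(0)=0$ in the derivative ratio above). Combined with the strict monotonicity of $h'$ just established, this forces $h'(x)>0$ for all $x>0$, finishing the proof. The main obstacle I anticipated is that a direct attack on $h''$ produces the expression $2\arcsinh x+\frac{x(x^2-2)}{\sqrt{1+x^2}}$, which is not manifestly positive since the algebraic term is negative for $x<\sqrt2$ and nearly cancels the logarithmic term; Lemma~\ref{mon-quotient-lem} sidesteps this cancellation by moving the differentiation one layer further, where the algebra becomes transparent.
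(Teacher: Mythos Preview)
Your proposal is correct and is essentially identical to the paper's argument: the paper also writes $h'(x)=h_1(x)/x^2$ with your $f=h_1$, computes $h_1'(x)=x\bigl(2\arcsinh x+\tfrac{3x}{\sqrt{1+x^2}}\bigr)$ (so your $f'/g'$ equals the paper's $h_2/2$), and observes that its derivative $(5+2x^2)/(1+x^2)^{3/2}$ is positive, together with the vanishing limits at $0$. If anything, you are more explicit than the paper in naming Lemma~\ref{mon-quotient-lem} as the mechanism that converts monotonicity of $f'/g'$ into monotonicity of $h'$ (hence convexity), whereas the paper simply lists the chain of derivatives.
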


\begin{proof}
This follows from the following arguments:
\begin{gather*}
h'(x)=\frac{x \sqrt{1+x^2}\,-\arcsinh x+x^2\arcsinh{x}}{x^2}
\triangleq\frac{h_1(x)}{x^2},\\
h_1'(x)=x \biggl(\frac{3 x}{\sqrt{1+x^2}}+2 \arcsinh{x}\biggr)
\triangleq xh_2(x),\\
h_2'(x)=\frac{5+2 x^2}{(1+x^2)^{3/2}}
>0
\end{gather*}
on $(0,\infty)$ and
\begin{equation*}
\lim_{x\to0^+}h_1(x)=\lim_{x\to0^+}h_2(x)=0.\qedhere
\end{equation*}
\end{proof}

\begin{lem}\label{f-u-p-x-lem}
For $u\in[0,1]$ and $p\ge\frac12$, let
\begin{equation}\label{eq2.1}
f_{u,p}(x)=p\ln(1+ux^2)-\ln{x}+\ln{\arcsinh{x}}
\end{equation}
on $(0,1)$. Then
the function $f_{u,p}(x)$ is positive if and only if $6pu\ge 1$ and it is negative if and only if $1+u\le\bigl(\frac1{t^*}\bigr)^{1/p}$, where $t^*$ is defined by~\eqref{t-star-dfn}.
\end{lem}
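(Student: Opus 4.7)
The plan is to separate necessity from sufficiency, and within sufficiency to exploit monotonicity of $f_{u,p}$ in both parameters to reduce each iff to a single boundary curve in the $(u,p)$-plane. I start by computing the endpoint values: since $\arcsinh x/x\to 1$ as $x\to 0^+$, one has $f_{u,p}(0^+)=0$, while $f_{u,p}(1^-)=\ln[(1+u)^pt^*]$. For necessity, the Taylor expansion
\[
f_{u,p}(x)=\Bigl(pu-\tfrac{1}{6}\Bigr)x^2+O(x^4)\qquad(x\to 0^+),
\]
which follows from $\ln(\arcsinh x/x)=-x^2/6+O(x^4)$, forces $6pu\ge 1$ whenever $f_{u,p}\ge 0$ on $(0,1)$; similarly $f_{u,p}(1^-)\le 0$ forces $1+u\le(1/t^*)^{1/p}$.

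For sufficiency, a direct differentiation gives $\partial_u f_{u,p}=px^2/(1+ux^2)>0$ and $\partial_p f_{u,p}=\ln(1+ux^2)>0$ on $(0,1)$, so $f_{u,p}$ is strictly increasing in both $u$ and $p$; hence it suffices to verify positivity along the curve $u=1/(6p)$ and negativity along $u_*(p):=(1/t^*)^{1/p}-1$. On the positivity curve, the inequality $(1+y)\ln(1+y)\ge y$ makes $p\mapsto p\ln(1+x^2/(6p))$ increasing, reducing matters to $p=\tfrac{1}{2}$, where the desired inequality $\tfrac{1}{2}\ln(1+x^2/3)+\ln(\arcsinh x/x)>0$ is equivalent to $\arcsinh x>\sqrt{3}\,x/\sqrt{3+x^2}$. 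Both sides vanish at $x=0$, and comparing derivatives reduces this to the transparent algebraic identity $(3+x^2)^3-27(1+x^2)=x^4(9+x^2)>0$ for $x>0$.

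The main obstacle is the negativity case along $u_*(p)$, where $f_{u_*,p}$ vanishes at \emph{both} endpoints $x=0$ and $x=1$, so endpoint comparison alone is insufficient. One readily verifies $pu_*(p)<\tfrac{1}{6}$ for $p\ge\tfrac{1}{2}$, so $f_{u_*,p}$ dips strictly below $0$ immediately to the right of $x=0$; the task is to show it stays negative all the way to $x=1$. The plan is to establish unimodality of $f_{u_*,p}$ on $(0,1)$: rewriting the $\arcsinh$-terms in $f'_{u_*,p}$ via the auxiliary function $h(x)=(1+x^2)\arcsinh x/x$ of Lemma~\ref{g-3-lem}, and then applying Lemma~\ref{mon-quotient-lem} to a suitable ratio, one should conclude that $f'_{u_*,p}$ has exactly one sign change on $(0,1)$. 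The doubly-vanishing $f_{u_*,p}$ would then attain its unique interior extremum as a minimum and stay negative throughout, with the monotonicity and convexity of $h$ from Lemma~\ref{g-3-lem} being essential in this final step.
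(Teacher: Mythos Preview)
Your necessity arguments (Taylor expansion at $x=0^+$ and the endpoint value at $x=1^-$) are correct and match the paper's. Your sufficiency argument for positivity is also correct but genuinely different from the paper's: by monotonicity of $f_{u,p}$ in $u$, and then in $p$ along the curve $u=1/(6p)$, you reduce everything to the single case $(u,p)=(\tfrac13,\tfrac12)$, where the inequality $\arcsinh x>\sqrt{3}\,x/\sqrt{3+x^2}$ follows from the derivative comparison you give. This route bypasses Lemmas~\ref{mon-quotient-lem} and~\ref{g-3-lem} entirely for the positivity half and is more elementary than the paper's treatment.

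For the negativity half, however, your sketch is incomplete, and once filled in it coincides with the paper's argument. The paper does not first reduce to the boundary $u=u_*(p)$; instead it analyses $f'_{u,p}$ for \emph{all} $u$ at once by writing
\[
f'_{u,p}(x)=(\text{positive factor})\cdot\Bigl[u-\frac{g_1(x)}{g_2(x)}\Bigr],
\]
with $g_1(x)=\arcsinh x-x/\sqrt{1+x^2}$ and $g_2(x)=(2p-1)x^2\arcsinh x+x^3/\sqrt{1+x^2}$, and shows via Lemma~\ref{mon-quotient-lem} (after expressing $g_1'/g_2'$ through the increasing function $h$ of Lemma~\ref{g-3-lem}) that $g_1/g_2$ decreases strictly from $1/(6p)$ at $0^+$ to a computable value at $1^-$. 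This yields in one stroke that $f_{u,p}$ is increasing if $u\ge 1/(6p)$, decreasing if $u$ is small enough, and unimodal (down then up) in between; combined with $f_{u,p}(1^-)=p\ln(1+u)+\ln t^*$, both iff statements follow. Your unimodality plan on the curve $u=u_*(p)$ is exactly the ``in between'' case of this analysis, so the step you deferred is precisely the paper's core computation---and since that machinery is required for negativity anyway, your separate elementary treatment of positivity, while neat, does not shorten the overall proof.
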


\begin{proof}
It is ready that
\begin{equation}\label{eq2.2}
\lim_{x\to 0^+}f_{u,p}(x)=0
\end{equation}
and
\begin{equation}\label{eq2.3}
\lim_{x\to1^-}f_{u,p}(x)=p\ln(1+u)+\ln(t^*).
\end{equation}
\par
An easy computation yields
\begin{equation}
\begin{aligned}\label{eq2.4}
f_{u,p}'(x)&=\frac{2pu x}{1+u x^2}+\frac{1}{\sqrt{1+x^2}\,\arcsinh{x}}-\frac{1}{x}\\
&=\frac{u\bigl[(2p-1)x^2\sqrt{1+x^2}\,\arcsinh{x}+x^3\bigr] -\bigl[\sqrt{1+x^2}\,\arcsinh{x}-x\bigr]}{x(1+ux^2)\sqrt{1+x^2}\,\arcsinh{x}}\\
&=\frac{(2p-1)x^2\sqrt{1+x^2}\,\arcsinh{x}+x^3}{x(1+ux^2)\sqrt{1+x^2}\,\arcsinh{x}} \biggl[u-\frac{g_1(x)}{g_2(x)}\biggr],
\end{aligned}
\end{equation}
where
\begin{equation*}
g_1(x)=\arcsinh{x}-\frac{x}{\sqrt{1+x^2}\,}\quad \text{and}\quad
g_{2}(x)=(2p-1)x^2\arcsinh{x}+\frac{x^3}{\sqrt{1+x^2}\,}.
\end{equation*}
Furthermore, we have
\begin{equation}\label{eq2.5}
g_1(0)=g_{2}(0)=0
\end{equation}
and
\begin{equation}\label{eq2.6}
\frac{g_1'(x)}{g_2'(x)}=\frac{1}{2(2p-1)\sqrt{1+x^2}\,h(x)+(2p+1)x^2+2p+2},
\end{equation}
where $h(x)$ is defined by~\eqref{g-3-dfn}. From Lemma~\ref{g-3-lem}, it follows that the quotient $\frac{g_1'(x)}{g_2'(x)}$ is strictly decreasing on $(0,1)$. Accordingly, from Lemma~\ref{mon-quotient-lem} and~\eqref{eq2.5}, it is deduced that the ratio $\frac{g_1(x)}{g_2(x)}$ is strictly decreasing on $(0,1)$.
\par
Moreover, making use of L'H\^{o}pital's rule leads to
\begin{equation}\label{eq2.7}
\lim_{x\to 0}\frac{g_1(x)}{g_2(x)}=\frac{1}{6p}
\end{equation}
and
\begin{equation}\label{eq2.8}
\lim_{x\to 1}\frac{g_1(x)}{g_2(x)}=\frac{\sqrt{2}\,t^*-1}{\sqrt{2}\,(2p-1)t^*+1}.
\end{equation}
\par
When $u\ge \frac{1}{6p}$, combining~\eqref{eq2.4} and~\eqref{eq2.7} with the monotonicity of $\frac{g_1(x)}{g_2(x)}$ shows that the function $f_{u,p}(x)$ is strictly increasing on $(0,1)$. Therefore, the positivity of $f_{u,p}(x)$ on $(0,1)$ follows from~\eqref{eq2.2} and the increasingly monotonicity of $f_{u,p}(x)$.
\par
When $u\le \frac{\sqrt{2}\,t^*-1}{\sqrt{2}\,(2p-1)t^*+1}$, combining~\eqref{eq2.4} and~\eqref{eq2.8} with the monotonicity of $\frac{g_1(x)}{g_2(x)}$ reveals that the function $f_{u,p}(x)$ is strictly decreasing on $(0,1)$. Hence, the negativity of $f_{u,p}(x)$ on $(0,1)$ follows from~\eqref{eq2.2} and the decreasingly monotonicity of $f_{u,p}(x)$.
\par
When $\frac{\sqrt{2}\,t^*-1}{\sqrt{2}\,(2p-1)t^*+1}<u<\frac{1}{6p}$, from~\eqref{eq2.4}, \eqref{eq2.7}, \eqref{eq2.8}, and the monotonicity of the ratio $\frac{g_1(x)}{g_2(x)}$, we conclude that there exists a number $x_{0}\in(0,1)$ such that $f_{u,p}(x)$ is strictly
decreasing in $(0,x_{0})$ and strictly increasing in $(x_{0},1)$.
Denote the limit in~\eqref{eq2.3} by $h_{p}(u)$. Then, from the above arguments, it follows that
\begin{equation}\label{eq2.10}
h_{p}\biggl(\frac{1}{6p}\biggr)=p\ln\biggl(1+\frac{1}{6p}\biggr)+\ln(t^*)>0
\end{equation}
and
\begin{equation}\label{eq2.11}
h_{p}\biggl(\frac{\sqrt{2}\,t^*-1}{\sqrt{2}\,(2p-1)t^*+1}\biggr) =p\ln\biggl[1+\frac{\sqrt{2}\,t^*-1}{\sqrt{2}\,(2p-1)t^*+1}\biggr]+\ln(t^*)<0.
\end{equation}
Since $h_{p}(u)$ is strictly increasing for $u>-1$, so it is also in $\bigl[\frac{\sqrt{2}\,t^*-1}{\sqrt{2}\,(2p-1)t^*+1},\frac1{6p}\bigr]$. Thus, the inequalities in~\eqref{eq2.10} and~\eqref{eq2.11} imply that the function $h_{p}(u)$ has a unique zero point $u_0=\bigl(\frac{1}{t^*}\bigr)^{1/p}-1\in \bigl(\frac{\sqrt{2}\,t^*-1}{\sqrt{2}\,(2p-1)t^*+1},\frac1{6p}\bigr)$ such that
$h_{p}(u)<0$ for $u\in \bigl[\frac{\sqrt{2}\,t^*-1}{\sqrt{2}\,(2p-1)t^*+1},u_0\bigr)$ and
$h_{p}(u)>0$ for $u\in \bigl(u_0,\frac1{6p}\bigr]$. As a result, combining~\eqref{eq2.2} and~\eqref{eq2.3} with the piecewise monotonicity of $f_{u,p}(x)$ reveals that $f_{u,p}(x)<0$ for all $x\in(0,1)$ if and only if $\frac{\sqrt{2}\,t^*-1}{\sqrt{2}\,(2p-1)t^*+1}<u<u_0$. The proof of Lemma~\ref{f-u-p-x-lem} is complete.
\end{proof}

\section{Proof of Theorem~\ref{th1.1}}

Now we are in a position to prove our Theorem~\ref{th1.1}.
\par
Since both $Q_{t,p}(a,b)$ and $M(a,b)$ are symmetric and homogeneous of degree $1$, without loss of generality, we assume that $a>b$. Let $x=\frac{a-b}{a+b}\in(0,1)$. From~\eqref{eq1.3} and~\eqref{eq1.8}, we obtain
\begin{equation*}
\begin{split}
\ln\frac{Q_{t,p}(a,b)}{T(a,b)}&=\ln\frac{Q_{t,p}(a,b)}{A(a,b)}-\ln\frac{T(a,b)}{A(a,b)}\\
&=p\ln\bigl[1+(1-2t)^2x^2\bigr]-\ln{x}+\ln{\arcsinh{x}}.
\end{split}
\end{equation*}
Thus, Theorem~\ref{th1.1} follows from Lemma~\ref{f-u-p-x-lem}.

\end{document}